\newtheorem{thm}{Theorem}
\newtheorem{prop}[thm]{Proposition}
\newtheorem{lem}[thm]{Lemma}
\newtheorem*{conj}{Conjecture}
\theoremstyle{definition}
\newtheorem{ex}[thm]{Example}
\newtheorem{rem}[thm]{Remark}
\title{Representing homology classes by symplectic surfaces}
\author{M.~J.~D.~Hamilton}
\address{      Institut f\"ur Geometrie und Topologie\\
               Universit\"at Stuttgart\\
               Pfaffenwaldring 57\\
               70569 Stuttgart\\
               Germany}
\email{mark.hamilton@math.lmu.de}
\date{\today}
\subjclass[2010]{Primary 57R17; Secondary 57N13, 57N35}
\keywords{4-manifold, symplectic, branched covering}
\begin{document}

\begin{abstract}We derive an obstruction to representing a homology class of a symplectic 4-manifold by an embedded, possibly disconnected, symplectic surface.
\end{abstract}

\maketitle

A natural question concerning symplectic 4-manifolds is the following: Given a closed symplectic 4-manifold $(M,\omega)$ and a homology class $B\in H_2(M;\mathbb{Z})$, determine whether there exists an embedded, possibly disconnected, closed symplectic surface representing the class $B$. This question has been studied by H.-V.~L\^e and T.-J.~Li \cite{Le, Li}. We always assume that the orientation of a symplectic surface is the one induced by the symplectic form. One necessary condition is then, of course, that the symplectic class $[\omega]$ evaluates positively on the class $B$, meaning that $\langle[\omega],B\rangle>0$. Among other things, it is shown in \cite{Li} that a class $B$ with $\langle[\omega],B\rangle>0$ in a symplectic 4-manifold is always represented by a symplectic {\em immersion} of a connected surface. It is also noted that an obstruction to representing a homology class $B$ by an embedded {\em connected} symplectic surface comes from the adjunction formula: The (even) integer 
\begin{equation*}
K_MB+B^2,
\end{equation*}
where $K_M$ denotes the canonical class of the symplectic 4-manifold $(M,\omega)$, has to be at least $-2$. This obstruction, however, disappears, if the number of components of the symplectic surface is allowed to grow large. Note that there are examples of classes in symplectic 4-manifolds which are represented by an embedded disconnected symplectic surface, but not by a connected symplectic surface: For example in the twofold blow-up $X\#2{\overline{\mathbb{CP}}{}^{2}}$ of any closed symplectic 4-manifold $X$ the sum of the classes of the exceptional spheres is not represented by a connected embedded symplectic surface according to the adjunction formula. It is the purpose of this article to derive an obstruction to representing a homology class by an embedded, possibly disconnected, symplectic surface.

In \cite{Li} it is also shown that for symplectic manifolds $M$ of dimension at least six, every class in $H_2(M;\mathbb{Z})$ on which the symplectic class evaluates positively is represented by a connected embedded symplectic surface. In \cite{Le} there is a conjecture which in the case of symplectic 4-manifolds $M$ says that if $\alpha$ is a class in $H_2(M;\mathbb{Z})$ on which the symplectic class evaluates positively, then there exists a positive integer $N$ depending on $\alpha$ such that $N\alpha$ is represented by an embedded, not necessarily connected, symplectic surface. In the examples at the end of this article we give counterexamples to this conjecture in the 4-dimensional case.

The non-existence of an embedded symplectic surface in the class $B$ has the following consequence for the Seiberg-Witten invariants, which we only state in the case $b_2^+>1$.
\begin{prop}\label{Taubes} Let $(M,\omega)$ be a closed symplectic 4-manifold with $b_2^+(M)>1$ and $B\neq 0$ an integral second homology class which cannot be represented by an embedded, possibly disconnected, symplectic surface. Then the Seiberg-Witten invariant of the $Spin^c$-structure 
\begin{equation*}
s_0\otimes PD(B)
\end{equation*}
is zero, where $s_0$ denotes the canonical $Spin^c$-structure with determinant line bundle $K_M^{-1}$ induced by a compatible almost complex structure.
\end{prop}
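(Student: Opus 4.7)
The strategy is to prove the contrapositive: assume that the Seiberg--Witten invariant of $s_0 \otimes PD(B)$ is nonzero, and construct an embedded, possibly disconnected, symplectic surface representing $B$. The central tool is Taubes' theorem identifying the Seiberg--Witten invariants of a symplectic 4-manifold with his Gromov invariants. In the form relevant here it asserts that, since $b_2^+(M) > 1$ and the invariant of $s_0 \otimes PD(B)$ does not vanish, for any almost complex structure $J$ compatible with $\omega$ there exists a nonempty $J$-holomorphic subvariety in $M$ whose fundamental class is precisely $B$.

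The next step is to upgrade this pseudo-holomorphic representative to an embedded symplectic one. I would decompose the subvariety as $\sum n_i C_i$ with the $C_i$ distinct, irreducible, embedded $J$-holomorphic curves and $n_i \geq 1$ their multiplicities. Each $C_i$ is automatically symplectic, since $J$ is compatible with $\omega$. To remove the multiplicities I would invoke the finer statement from Taubes' analysis of the Gromov invariants: on a symplectic 4-manifold the only irreducible components that can appear with multiplicity $n_i > 1$ are tori of self-intersection zero. For such a torus a tubular neighborhood is symplectomorphic to $T^2 \times D^2$ with the product symplectic form, so $C_i$ may be replaced by $n_i$ disjoint parallel copies inside this neighborhood; these copies are embedded, symplectic, and together represent $n_i [C_i]$.

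Carrying out this replacement componentwise and taking the disjoint union produces an embedded, possibly disconnected, symplectic surface representing $B$, contradicting the hypothesis. The delicate point is the correct invocation of Taubes: one needs not merely the existence of a $J$-holomorphic subvariety in the class $B$, but also the structural refinement that nontrivial multiplicities occur only on square-zero tori, where they admit the elementary parallel-copy resolution. Everything else is a direct reference to, or an immediate consequence of, the SW${}={}$Gr correspondence.
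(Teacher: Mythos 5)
Your argument is correct and is precisely the one the paper intends: the paper justifies this proposition only by citing Taubes' SW\,$\Rightarrow$\,Gr theorem \cite{T2}, and your write-up supplies the standard details behind that citation (an embedded $J$-holomorphic configuration representing $B$, multiplicities $n_i>1$ occurring only on square-zero tori, and their resolution by $n_i$ parallel copies in a split tubular neighborhood). The only cosmetic point is that the embedded configuration with the torus refinement is guaranteed for a \emph{generic} compatible $J$ rather than for every $J$, which is all the contrapositive argument requires.
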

Here $PD$ denotes the Poincar\'e dual of a homology class. Note that the first Chern class of the $Spin^c$-structure $s_0\otimes PD(B)$ is equal to $-K_M+2PD(B)$. Proposition \ref{Taubes} is a consequence of a theorem of Taubes, relating classes with non-zero Seiberg-Witten invariants to embedded symplectic surfaces \cite{T2}.

In the following, let $(M,\omega)$ denote a closed symplectic 4-manifold and $\Sigma\subset M$ an embedded, possibly disconnected, closed symplectic surface representing a class $B\in H_2(M;\mathbb{Z})$. We always assume that the orientation of $M$ is given by the symplectic form ($\omega\wedge\omega>0$). If the class $B$ is divisible by an integer $d>1$, in the sense that there exists a class $A\in H_2(M;\mathbb{Z})$ such that $B=dA$, then there exists a $d$-fold cyclic ramified covering $\phi\colon\overline{M}\rightarrow M$, branched along $\Sigma$. The branched covering is again a closed symplectic 4-manifold. This is a well-known fact (the pullback of the symplectic form $\omega$ plus $t$ times a Thom form for the preimage $\overline{\Sigma}$ of the branch locus is for small positive $t$ a symplectic form on $\overline{M}$; see \cite {G, P} for a careful discussion). The invariants of $\overline{M}$ are given by the following formulas \cite[p.~243]{GS}, \cite{Hirz}:  

\begin{align*}
K_{\overline{M}}&=\phi^*(K_M+(d-1)PD(A))\\
K_{\overline{M}}^2&=d(K_M+(d-1)PD(A))^2\\
w_2(\overline{M})&=\phi^*(w_2(M)+(d-1)PD(A)_2)\\
\sigma(\overline{M})&=d\left(\sigma(M)-\frac{d^2-1}{3}A^2\right)\\
\end{align*} 
Here $PD(A)_2\in H^2(M;\mathbb{Z}_2)$ is the mod 2 reduction of $PD(A)$. The second equation follows from the first because the branched covering map has degree $d$.

Suppose that the branched covering $\overline{M}$ is symplectically minimal and not a ruled surface over a curve of genus greater than 1. Then theorems of C.~H.~Taubes and A.-K.~Liu \cite{Liu, T} imply that $K_{\overline{M}}^2\geq 0$. With the formula above, we get the following obstruction on the class $A$.
\begin{thm}\label{main thm} Let $(M,\omega)$ be a closed symplectic 4-manifold, $\Sigma\subset M$ an embedded, possibly disconnected, closed symplectic surface and $d>1$ an integer such that $dA=[\Sigma]$ for a class $A\in H_2(M;\mathbb{Z})$. Consider the $d$-fold cyclic branched cover $\overline{M}$, branched along $\Sigma$. If $\overline{M}$ is minimal and not a ruled surface over a curve of genus greater than 1, then 
\begin{equation*}
(K_M+(d-1)PD(A))^2\geq 0.
\end{equation*}
\end{thm}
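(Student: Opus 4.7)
The plan is essentially laid out in the paragraph preceding the theorem statement, so my proof will consist of assembling the pieces in order. The strategy is to transfer the geometric positivity statement from the branched cover $\overline{M}$, where a theorem of Taubes--Liu applies, back to $M$ using the explicit formula for $K_{\overline{M}}^2$.

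First I would record that because $[\Sigma] = dA$ with $d > 1$, the $d$-fold cyclic branched cover $\phi\colon \overline{M} \to M$ along $\Sigma$ exists, and by the construction recalled just before the theorem (pullback of $\omega$ plus a small multiple of a Thom form for $\overline{\Sigma}$, as in Gompf and Pao), $\overline{M}$ carries a symplectic form. Hence $\overline{M}$ is a closed symplectic 4-manifold to which the results on Seiberg--Witten invariants of symplectic 4-manifolds may be applied.

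Next I would invoke the theorem of C.~H.~Taubes and A.-K.~Liu cited in the paper: any closed symplectic 4-manifold that is minimal and is not a ruled surface over a curve of genus greater than $1$ satisfies $K^2 \geq 0$. By hypothesis $\overline{M}$ satisfies these conditions, so
\begin{equation*}
K_{\overline{M}}^2 \geq 0.
\end{equation*}

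Finally I would combine this inequality with the second formula from the displayed list of invariants, namely
\begin{equation*}
K_{\overline{M}}^2 = d\bigl(K_M + (d-1)PD(A)\bigr)^2,
\end{equation*}
and divide by $d > 0$ to obtain the desired conclusion $(K_M + (d-1)PD(A))^2 \geq 0$. There is no real obstacle here beyond ensuring that the ingredients are applied correctly: the formula for $K_{\overline{M}}$ as the pullback of $K_M + (d-1)PD(A)$ is the Hirzebruch-type branched-cover formula stated in the excerpt, and because $\phi$ has degree $d$ the second formula follows from the first by the projection formula. The proof is therefore essentially a bookkeeping exercise once Taubes--Liu is in hand.
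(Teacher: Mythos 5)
Your proposal is correct and follows exactly the argument the paper gives in the paragraph preceding the theorem: form the symplectic cyclic branched cover, apply the Taubes--Liu theorem to conclude $K_{\overline{M}}^2\geq 0$, and divide the identity $K_{\overline{M}}^2=d(K_M+(d-1)PD(A))^2$ by $d$. Nothing is missing.
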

It is therefore important to ensure that the branched covering $\overline{M}$ is minimal and not a ruled surface. First, we have the following lemma.
\begin{lem} Let $\phi\colon\overline{M}\rightarrow M$ be a cyclic $d$-fold branched covering of closed oriented 4-manifolds. Then $b_2^+(\overline{M})\geq b_2^+(M)$.
\end{lem}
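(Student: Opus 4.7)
The plan is to use the fact that a branched $d$-fold cover $\phi\colon\overline{M}\to M$ is, in particular, a degree-$d$ map of closed oriented $4$-manifolds, so the pullback $\phi^*\colon H^2(M;\mathbb{R})\to H^2(\overline{M};\mathbb{R})$ interacts well with the intersection forms. The obstacle, such as it is, is simply to package this into a statement about maximal positive-definite subspaces; no geometric work on the branch locus is needed.

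First, I would record that for any such degree-$d$ map the transfer (or pushforward) satisfies $\phi_*\phi^*=d\cdot\mathrm{id}$ on $H^*(M;\mathbb{R})$. Since $d\neq 0$, this forces $\phi^*$ to be injective. Next, I would use the projection formula together with $\phi_*[\overline{M}]=d[M]$ to compute, for any $\alpha,\beta\in H^2(M;\mathbb{R})$,
\begin{equation*}
(\phi^*\alpha)\cdot(\phi^*\beta)=\langle\phi^*(\alpha\cup\beta),[\overline{M}]\rangle=\langle\alpha\cup\beta,\phi_*[\overline{M}]\rangle=d\,(\alpha\cdot\beta).
\end{equation*}
Thus $\phi^*$ is a linear injection which scales the intersection form on $M$ by the positive factor $d$.

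To finish, I would pick a maximal positive-definite subspace $V^+\subset H^2(M;\mathbb{R})$, of dimension $b_2^+(M)$. By the displayed identity the intersection form of $\overline{M}$ restricted to the image $\phi^*(V^+)$ equals $d$ times the intersection form of $M$ restricted to $V^+$, hence is positive definite. Since $\phi^*$ is injective, $\dim\phi^*(V^+)=b_2^+(M)$, and therefore $b_2^+(\overline{M})\geq b_2^+(M)$, which is the desired inequality.
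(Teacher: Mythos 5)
Your proposal is correct and takes essentially the same route as the paper: injectivity of $\phi^*$ on real cohomology for a positive-degree map (via the transfer/Poincar\'e duality) together with the fact that $\phi^*$ scales the intersection form by $d>0$, so the image of a maximal positive-definite subspace remains positive definite of the same dimension. The paper's proof is simply a terser version of this argument, and your write-up fills in the details it leaves implicit.
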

\begin{proof} With our choice of orientations, the map $\phi\colon\overline{M}\rightarrow M$ has positive degree. By Poincar\'e duality, the induced map $\phi^*\colon H^*(M;\mathbb{R})\rightarrow H^*(\overline{M};\mathbb{R})$ is injective. It maps classes in the second cohomology of positive square to classes of positive square. This implies the claim.
\end{proof}
\begin{prop}\label{prop} In the notation of Theorem \ref{main thm}, each of the following two conditions implies that $\overline{M}$ is minimal and has $b_2^+(\overline{M})>1$ and hence is not a ruled surface:
\begin{enumerate}
\item If $d$ is odd assume that $M$ is spin and if $d$ is even assume that $PD(A)$ is characteristic. Also assume that $3\sigma(M)\neq (d^2-1)A^2$.
\item Assume that $b_2^+(M)\geq 2$ and there exists an integer $k\geq 2$ such that the class 
\begin{equation*}
K_M+(d-1)PD(A)
\end{equation*}
is divisible by $k$.
\end{enumerate}
\end{prop}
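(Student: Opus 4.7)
The plan is to verify minimality and $b_2^+(\overline{M})>1$ separately under each hypothesis. The conclusion ``hence not a ruled surface'' is then automatic, since a minimal ruled surface has $b_2^+=1$.

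Under hypothesis (2) both claims are quick. First, $b_2^+(\overline{M})\geq b_2^+(M)\geq 2$ is immediate from the preceding lemma. For minimality, the formula $K_{\overline{M}}=\phi^*(K_M+(d-1)PD(A))$ shows that $K_{\overline{M}}$ is divisible by $k$ in $H^2(\overline{M};\mathbb{Z})$, so if $S\subset\overline{M}$ were a symplectic $(-1)$-sphere, adjunction would give $K_{\overline{M}}\cdot S=-1$, in conflict with divisibility by $k\geq 2$.

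Under hypothesis (1), I would first check that $\overline{M}$ is spin using the displayed formula for $w_2(\overline{M})$. When $d$ is odd the factor $d-1$ kills $PD(A)_2$ modulo $2$ and $w_2(M)=0$ by assumption; when $d$ is even we have $(d-1)PD(A)_2=PD(A)_2$, and by Wu's formula the characteristic hypothesis gives $PD(A)_2=w_2(M)$, so the bracketed sum becomes $2w_2(M)=0$ mod $2$. Either way $w_2(\overline{M})=0$. Since a spin $4$-manifold has no embedded sphere of odd self-intersection, $\overline{M}$ has no symplectic $(-1)$-sphere and hence is minimal.

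The remaining inequality $b_2^+(\overline{M})>1$ is the main obstacle, and I would argue by contradiction. The signature formula combined with $3\sigma(M)\neq(d^2-1)A^2$ yields $\sigma(\overline{M})\neq 0$. Assume $b_2^+(\overline{M})=1$. Either $\overline{M}$ is a ruled surface over a curve of genus greater than $1$, in which case the spin property forces $\overline{M}\cong S^2\times\Sigma_g$ (the nontrivial $S^2$-bundle has $w_2\neq 0$), whose intersection form has signature zero; or $\overline{M}$ is not such a ruled surface, and the theorem of Taubes and Liu quoted in the excerpt gives $K_{\overline{M}}^2\geq 0$. In the latter alternative, combining $K_{\overline{M}}^2=2\chi(\overline{M})+3\sigma(\overline{M})$ with $b_2^+(\overline{M})=1$ rewrites as $K_{\overline{M}}^2=9-4b_1(\overline{M})-b_2^-(\overline{M})$, so $b_2^-(\overline{M})\leq 9$ and $\sigma(\overline{M})\geq -8$; then Rokhlin's theorem ($16\mid\sigma$ for smooth spin $4$-manifolds) forces $\sigma(\overline{M})=0$. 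Either way $\sigma(\overline{M})=0$, contradicting what was just established.
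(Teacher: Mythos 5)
Your argument is correct, and most of it coincides with the paper's proof: part (b) (the Lemma gives $b_2^+(\overline{M})\geq b_2^+(M)\geq 2$, and divisibility of $K_{\overline{M}}$ rules out a symplectic $(-1)$-sphere) and, in part (a), the verification via the $w_2$-formula that $\overline{M}$ is spin, hence has even intersection form and is minimal, and the use of the signature formula to get $\sigma(\overline{M})\neq 0$, are exactly the paper's steps. Where you genuinely diverge is the remaining step $b_2^+(\overline{M})>1$ in case (a): the paper simply quotes Furuta's $\tfrac{10}{8}$-theorem, which for a spin $4$-manifold with non-zero signature (hence $|\sigma|\geq 16$ by Rokhlin) gives $b_2^+(\overline{M})\geq 3$ in one line. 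You instead assume $b_2^+(\overline{M})=1$ (the case $b_2^+=0$ should be explicitly excluded by noting $[\omega]^2>0$ on a symplectic manifold) and derive $\sigma(\overline{M})=0$ from Liu's dichotomy: in the irrational ruled case a minimal ruled surface is an $S^2$-bundle over a curve and has signature zero (your appeal to spinness to pin down the trivial bundle is not even needed for this), and otherwise $K_{\overline{M}}^2=2\chi(\overline{M})+3\sigma(\overline{M})\geq 0$ together with $b_2^+=1$ forces $-8\leq\sigma(\overline{M})\leq 1$, whence $\sigma(\overline{M})=0$ by Rokhlin's theorem. This is a valid and arguably more elementary substitute for Furuta's theorem, since it uses only Rokhlin plus the Taubes--Liu input that the paper already needs for Theorem \ref{main thm}; the trade-off is a longer case analysis and the weaker (but entirely sufficient) conclusion $b_2^+(\overline{M})\geq 2$ in place of $\geq 3$.
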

\begin{proof} Consider the $d$-fold branched covering $\overline{M}$, branched along $\Sigma$. The assumptions in case (a) imply that $\overline{M}$ is spin and that the signature $\sigma(\overline{M})$ is non-zero. According to a theorem of M.~Furuta \cite{F} we have $b_2^+(\overline{M})\geq 3$. Also the symplectic manifold $\overline{M}$ is minimal, because it is spin. In case (b) the lemma implies that $b_2^+(\overline{M})\geq 2$. In addition, the symplectic manifold $\overline{M}$ is minimal, because its canonical class is divisible by $k$ (a non-minimal symplectic 4-manifold $Y$ contains a symplectic sphere $S$ with $K_YS=-1$).
\end{proof}

\begin{ex}Consider $M=K3$. Then we have $K_M=0$. Let $d\geq 3$ be an integer and $A\in H_2(M;\mathbb{Z})$ a class with $A^2<0$. Theorem \ref{main thm} together with Proposition \ref{prop} part (b) imply that $dA$ is not represented by an embedded symplectic surface. Note that $K3$ contains indivisible classes of negative self-intersection which, for a suitable choice of symplectic structure, are represented by symplectic surfaces, for example symplectic $(-2)$-spheres. Let $A$ be the homology class of such a sphere and $\alpha=3A$. Then $\alpha$ is a counterexample to L\^e's Conjecture 1.4 in \cite{Le}.  
\end{ex}

\begin{ex}Let $X$ be a closed symplectic spin 4-manifold with $b_2^+>1$ and $M$ the blow-up $X\#{\overline{\mathbb{CP}}{}^{2}}$. Let $E$ denote the class of the exceptional sphere in $M$. We have $K_M=K_X+PD(E)$. For every positive even integer $d$ with $d^2>K_X^2$, the class $dE$ is not represented by a symplectic surface. Taking for example the blow-up of the $K3$ surface and $\alpha=2E$, we get another counterexample to L\^e's conjecture.  
\end{ex}

Note that with this method it is impossible to find a counterexample to L\^e's conjecture under the additional assumption that $\alpha^2>0$. 

In light of the second example, the following conjecture seems natural.
\begin{conj} Let $M$ be the blow-up $X\#{\overline{\mathbb{CP}}{}^{2}}$ of a closed symplectic 4-manifold $X$ and $E$ the class of the exceptional sphere. Then $dE$ is not represented by an embedded symplectic surface for all integers $d\geq 2$.
\end{conj}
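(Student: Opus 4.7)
The plan is to argue by contradiction using positivity of intersection for pseudoholomorphic curves in a $4$-manifold. Suppose $\Sigma \subset M$ is an embedded, possibly disconnected, closed symplectic surface with $[\Sigma] = dE$ and $d \geq 2$.

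First I would pick an $\omega$-compatible almost complex structure $J$ on $M$ for which $\Sigma$ is $J$-holomorphic, which is possible because $\Sigma$ is symplectic. Since the Gromov--Taubes invariant of the exceptional class $E$ equals $\pm 1$ (by Taubes' $SW = \mathrm{Gr}$ together with the Seiberg--Witten blow-up formula, or alternatively by the work of McDuff and Lalonde--McDuff on symplectic exceptional spheres), a further generic perturbation of $J$ within the space $\mathcal{J}_\Sigma$ of $\omega$-compatible structures preserving $T\Sigma$ should produce an embedded $J$-holomorphic sphere $C \subset M$ representing the class $E$.

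Now I would apply positivity of intersection for distinct irreducible $J$-holomorphic curves in a $4$-manifold. If $C$ were not a component of $\Sigma$, this would give $[C] \cdot [\Sigma] \geq 0$, contradicting $[C] \cdot [\Sigma] = E \cdot dE = -d < 0$. Hence $C$ must coincide with one of the components of $\Sigma$, so write $\Sigma = C \sqcup \Sigma'$. Since $\Sigma$ is embedded, $C$ and $\Sigma'$ are disjoint, so $[\Sigma'] \cdot [C] = 0$. On the other hand $[\Sigma'] = dE - E = (d-1)E$, which yields
\[
[\Sigma'] \cdot [C] = (d-1)\,E \cdot E = -(d-1),
\]
forcing $d = 1$ and contradicting the hypothesis $d \geq 2$.

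The main technical obstacle will be the first step: producing a single $\omega$-compatible $J$ for which both $\Sigma$ and an embedded representative of the exceptional class $E$ are pseudoholomorphic. Because $\mathcal{J}_\Sigma$ is non-empty, a standard transversality argument combined with the non-vanishing of the Gromov invariant of $E$ should produce an embedded $J$-holomorphic sphere in class $E$ for Baire-generic $J \in \mathcal{J}_\Sigma$. The delicate points are verifying Fredholm transversality within this constrained subspace of compatible structures and ruling out degenerations in which the stable limit representing $E$ is a reducible curve whose components share subvarieties with $\Sigma$ in ways that might evade the clean dichotomy used in the second step.
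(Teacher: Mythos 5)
Note first that the statement you are proving is presented in the paper as an open \emph{conjecture}, not a theorem: the author verifies it only for $X=K3$ and $X=T^4$ (via the branched-covering obstruction of Theorem~\ref{main thm}) and in the complex category, where positivity of intersections applies because the exceptional curve and the given curve are automatically holomorphic for the \emph{same} integrable complex structure. Your argument is precisely the natural attempt to transplant that complex-category proof to the symplectic setting, and the difficulty you flag at the end is not a technicality but the essential obstruction. The non-vanishing of the Gromov invariant of $E$ guarantees an embedded $J$-holomorphic $(-1)$-sphere in class $E$ only for \emph{generic} compatible $J$. The structures for which $\Sigma$ is pseudoholomorphic form a thin subset of the space of all compatible $J$, and there is no transversality result allowing you to perturb within $\mathcal{J}_\Sigma$ to achieve genericity for the class $E$: the curves you need to control are exactly the ones permitted to degenerate onto $\Sigma$ itself, since $E\cdot[\Sigma]=-d<0$ forces any representative of $E$ to interact with $\Sigma$.

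For such a non-generic $J$, Gromov compactness yields only a connected cusp curve with components $C_i$ and multiplicities $m_i$ satisfying $\sum_i m_i[C_i]=E$, possibly with multiply covered components, and your dichotomy then breaks down. Since $\sum_i m_i[C_i]\cdot E=E^2=-1<0$, some component $C_{i_0}$ has $[C_{i_0}]\cdot E<0$ and hence, by positivity of intersections with the components of $\Sigma$, must itself be a component of $\Sigma$; but its class $B$ need not be $E$ or any multiple of $E$. From disjointness of the components of $\Sigma$ one gets only $B\cdot(dE-B)=0$, hence $B^2=dB\cdot E\le -d$, which together with the adjunction formula yields no contradiction. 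So the reducible case cannot be excluded by these soft arguments, and this is exactly why the general statement remains open. Your proof is valid where the limit is an irreducible curve in class $E$ (in particular in the integrable case), but as written it does not establish the conjecture.
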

This conjecture holds by a similar argument as above for $X$ the $K3$ surface and the 4-torus $T^4$. Moreover, using positivity of intersections, the conjecture holds in the complex category for the blow-up of a complex surface and embedded complex curves. In fact, in this category the result holds not only for the exceptional curve in a blow-up, but for multiples of the class of any connected embedded complex curve with negative self-intersection in a complex surface.

\begin{rem}Branched covering arguments have been used in the past to find lower bounds on the genus of a connected surface representing a divisible homology class in a closed 4-manifold, see \cite{B, HS, KM, R}. 
\end{rem}

\section*{Acknowledgements} \noindent I would like to thank D.~Kotschick for very helpful comments.

\bigskip
\bigskip

\end{document}